\theoremstyle{plain}
\newtheorem{theorem}{Theorem}
\newtheorem{lemma}[theorem]{Lemma}
\title{Representing integers as sums of mixed powers of primes}
\author{
  Geovane Matheus Lemes Andrade$^{a,*}$ \\
  \texttt{gmla.andrade@gmail.com}
  \and
  Hemar Godinho$^{a}$ \\
  \texttt{hemar@unb.br}
}
\date{}
\begin{document}
\maketitle

\begin{center}
$^{a}$Department of Mathematics, University of Brasília, Brasília, DF, Brazil\\
$^{*}$Corresponding author: \texttt{gmla.andrade@gmail.com}
\end{center}

\renewcommand{\thefootnote}{}
\footnotetext{\textit{Mathematics Subject Classification:} Primary 11P55; Secondary 11P32.}
\footnote{\textit{Key words and phrases:} Waring–Goldbach problem, circle method, additive representations.}
\renewcommand{\thefootnote}{\arabic{footnote}}

\begin{abstract}
    We establish two new Waring–Goldbach type representations: every sufficiently large odd integer $n$ can be expressed as 
    $$n = p_1^2+p_2^2+p_3^3+p_4^3+p_5^5+p_6^6+p_7^c,$$
    where each $p_i$ is prime and $c \in\{6,7\}$.
    
\end{abstract}

\section{Introduction}
    The Waring-Goldbach problem for mixed powers investigates the representation of natural numbers $n$ as a sum
    \begin{align}\label{equa1}
        n=x_1^{k_1}+x_2^{k_2}+\dots+x_s^{k_s},
    \end{align}
    where $x_1,\dots,x_s$ are prime variables and $ 1\leq k_1 \leq\dots \leq k_s$ are fixed integers. Relatively little is known about results of this kind. 
    Building on classical results of the form \eqref{equa1} with integer variables, several authors have investigated analogous representations in which the variables are restricted to prime numbers; see, for example, \cite{Bru23}, \cite{Bru21}, \cite{LICai2016116}, \cite{LuMu2016}, \cite{MU2014109}. Our work continues in this direction. 
    
    In 1987, Br\"udern \cite{Bru87} proved that every sufficiently large integer $n$ can be represented in the form 
    $$n = x_1^2+x_2^2+x_3^3+x_4^3+x_5^a+x_6^b+x_7^c,$$
    where $x_1,\dots,x_7$ are positive integers and $c\geq b\geq a\geq 3$ are fixed integers satisfying $1/a +1/b +1/c>\frac{1}{3}$. We prove the Waring–Goldbach analogue of Brudern’s result for $(a,b,c)=(5,6,6),(5,6,7)$.

\begin{theorem}\label{teo1}
    Let $6\leq c\leq 7$. Then, for every sufficiently large odd integer $n$, the equation
    \begin{align}\label{equa2}
        n = x_1^2+x_2^2+x_3^3+x_4^3+x_5^5+x_6^6+x_7^c
    \end{align}
    has a solution with all $x_j$ prime.
\end{theorem}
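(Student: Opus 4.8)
The plan is to apply the Hardy--Littlewood circle method. Write $e(\theta)=e^{2\pi i\theta}$, set $X=n$, and for each exponent $k\in\{2,3,5,6,c\}$ introduce the prime-weighted exponential sum
\[
 f_k(\alpha)=\sum_{p\le X^{1/k}}(\log p)\,e(\alpha p^k).
\]
The weighted number of representations in \eqref{equa2} is
\[
 R(n)=\int_0^1 f_2(\alpha)^2 f_3(\alpha)^2 f_5(\alpha) f_6(\alpha) f_c(\alpha)\, e(-\alpha n)\,d\alpha,
\]
and it suffices to prove $R(n)>0$ for all large odd $n$, since a positive value produces a prime solution. I would dissect $[0,1)$ into major arcs $\mathfrak{M}$, consisting of $\alpha$ near a rational $a/q$ with $q\le P_0:=X^{\delta}$ and $|q\alpha-a|\le P_0 X^{-1}$, and minor arcs $\mathfrak{m}=[0,1)\setminus\mathfrak{M}$.

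On the major arcs the analysis is standard. Using Siegel--Walfisz to replace each $f_k$ near $a/q$ by a product of a normalised complete exponential sum and an oscillatory integral, the contribution of $\mathfrak{M}$ evaluates to
\[
 \int_{\mathfrak{M}}\ \sim\ \mathfrak{S}(n)\,\mathfrak{J}(n)\,X^{\theta},\qquad \theta=\tfrac{2}{3}+\tfrac{1}{5}+\tfrac{1}{6}+\tfrac{1}{c},
\]
where $\mathfrak{J}(n)\gg1$ is the singular integral and $\mathfrak{S}(n)=\prod_p\sigma_p(n)$ the singular series; note $\theta=\tfrac{6}{5}$ when $c=6$. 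The one point requiring care is the positivity $\mathfrak{S}(n)\gg1$: for every odd prime $p$ the presence of the two square terms makes the relevant congruence nonsingularly soluble (take $\partial/\partial x_1$), so $\sigma_p(n)>0$, while at $p=2$ a sum of seven odd primes is odd, and the oddness of $n$ is precisely the condition that keeps $\sigma_2(n)$ bounded away from zero. This yields a main term of exact order $X^{\theta}$, a positive power of $n$.

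The heart of the argument, and the main obstacle, is to show that $\mathfrak{m}$ contributes $o(X^{\theta})$. Pointwise bounds alone cannot suffice, because the problem lies at the boundary of the method (this is what forces $c\le 7$): crude Weyl savings on the sixth and seventh power sums are far too small to beat the main term directly. Instead I would extract a single Weyl-type factor from the quintic sum, for which $\sup_{\mathfrak{m}}|f_5(\alpha)|\ll X^{1/5-\rho_5+\epsilon}$ with $\rho_5>0$, and bound the rest by a mean value,
\[
 \int_{\mathfrak{m}}\bigl|f_2^2 f_3^2 f_5 f_6 f_c\bigr|\,d\alpha \ll \Bigl(\sup_{\mathfrak{m}}|f_5|\Bigr)\int_0^1\bigl|f_2^2 f_3^2 f_6 f_c\bigr|\,d\alpha.
\]
When $c=6$ the remaining integral is $\int_0^1|f_2|^2|f_3|^2|f_6|^2\,d\alpha$, and when $c=7$ a Cauchy--Schwarz step bounds it by $\bigl(\int_0^1|f_2|^2|f_3|^2|f_6|^2\bigr)^{1/2}\bigl(\int_0^1|f_2|^2|f_3|^2|f_7|^2\bigr)^{1/2}$. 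The decisive technical inputs are therefore the sixth-power mixed mean values
\[
 \int_0^1|f_2|^2|f_3|^2|f_6|^2\,d\alpha\ll X^{1+\epsilon},\qquad \int_0^1|f_2|^2|f_3|^2|f_7|^2\,d\alpha\ll X^{\frac{41}{42}+\epsilon},
\]
at their true (diagonal-dominated) order. Granting these, the exponents balance: for $c=6$ one obtains $X^{1/5-\rho_5}\cdot X^{1+\epsilon}=X^{6/5-\rho_5+\epsilon}=o(X^{\theta})$, and for $c=7$ the combination gives a saving provided $\rho_5>\tfrac{1}{84}$, which any nontrivial quintic Weyl bound supplies. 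Hence $R(n)\gg X^{\theta}>0$ and the theorem follows.

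I expect the genuine difficulty to reside entirely in establishing the two mixed mean values above. A naive application of Hölder's inequality splitting them into single-variable moments $\int|f_2|^6$, $\int|f_3|^6$, $\int|f_6|^6$ is lossy---it yields only $X^{7/6}$ in place of $X^{1+\epsilon}$---so one must exploit the additive structure of the equation $p_1^2+q_1^3+s_1^6=p_2^2+q_2^3+s_2^6$ directly, controlling the off-diagonal solutions by a Hua-type argument or an auxiliary circle-method treatment in the spirit of the works cited in the introduction. This is where the restriction $6\le c\le 7$ is essential: for larger $c$ the corresponding mean value exceeds the main term and the method breaks down.
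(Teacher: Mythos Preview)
Your plan diverges from the paper's in where the Weyl saving is taken: you pull the sup-norm out of $f_5$ and absorb $f_6,f_c$ into mean values, whereas the paper extracts a tiny saving from $|f_c|^{1/2}$ and handles the fifth power through a \emph{shortened} sum $f_{5,c}$ (range $p\le P_5^{\theta_c}$, with $\theta_6=5/6$, $\theta_7=87/98$) fed into a Davenport--Vaughan mean value $J_{1,c}=\int_0^1|f_2f_c|^2|f_{5,c}|^4\,d\alpha$, estimated via Lemma~4 of Vaughan (1986). The H\"older decomposition is $J_{1,c}^{1/4}J_2^{1/2}J_3^{1/4}\sup|f_c|^{1/2}$ with $J_2=\int|f_2f_3f_6|^2$ and $J_3=\int|f_2|^2|f_3|^4$, and the minor arcs are finished by a pruning step on a thin set $\mathfrak R_c\setminus\mathfrak M$ using Kumchev's major-arc bound.

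For $c=7$ your route has a genuine gap. The assertion ``$\rho_5>\tfrac1{84}$, which any nontrivial quintic Weyl bound supplies'' is false. First, your setup is internally inconsistent: you take major arcs at $q\le X^\delta$ but then invoke Siegel--Walfisz, which is only valid for $q\le(\log X)^B$. With logarithmic major arcs the minor-arc estimate on $f_5$ saves at most a power of $\log X$, so effectively $\rho_5=0$ and you miss the target by a factor $X^{1/84}$. To get a genuine power saving on $f_5$ you must widen the major arcs to polynomial size, whereupon Siegel--Walfisz fails and a separate pruning argument is needed for the intermediate range---you do not mention this. Even granting that, the threshold $\rho_5>1/84$ demands $\sup_{\mathfrak m}|f_5|\ll P_5^{1-5/84+\varepsilon}$ over primes; since $5/84$ sits just below the classical Weyl exponent $2^{-4}$ for \emph{integer} sums, this is a sharp quantitative requirement on degree-five prime Weyl sums, not something delivered by ``any nontrivial'' bound, and you would have to locate or prove such an estimate explicitly.

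The paper's device of shortening the fifth-power range sidesteps this entirely: it sacrifices a factor $P_5^{1-\theta_c}$ in the main term but buys an essentially diagonal bound for $J_{1,c}$, after which any fixed power saving on $f_c$ (available from Kumchev--Wooley) closes the argument. The parameters $\theta_6,\theta_7$ are tuned so the exponents balance exactly, which is why the method stops at $c=7$.
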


We follow a similar approach as Brüdern \cite{Bru23}. A suitable choice of the parameter in the definition of an exponential sum enables us to obtain sharper bounds for mean values, thanks to Davenport’s method \cite{davenport42} in the refined form developed by Vaughan \cite{Vau86}. Applications of Hölder’s inequality yield mean values that are well understood, leaving a fractional contribution from the sixth (or seventh) power to be estimated. This estimation is carried out using Kumchev’s bounds for exponential sums over primes \cite{Kum06}, along with recent advances in Vinogradov’s mean value theorem as exploited in \cite{KumWoo16}. A pruning argument then completes the proof of the theorem.

\section{Notation and preliminaries}

Throughout this paper, $p$ denotes a prime number, $e(\alpha)$ abbreviates $\exp(2\pi i \alpha)$, and $\phi(q)$ is Euler's totient function.
 The implicit constants in Vinogradov’s and Landau’s symbols will depend on the value assigned to $\epsilon$.

Fix $6 \leq c \leq 7$, and let $n$ be a large positive integer. Define 
$$P_k = n^{1/k}, \qquad L = \log n.$$
For $k = 2,3,6,c$, let $$f_k(\alpha) = \sum \limits_{\frac{1}{2}P_k < p \leq P_k} e(\alpha p^k)\log p.$$
Moreover, set $\theta_6 = 5/6 $ and $\theta_7=87/98$, and define $$f_{5,c}(\alpha) = \sum \limits_{1 < p \leq P_5^{\theta_c}} e(\alpha p^5)\log p.$$
For simplicity, we often write $f_k$ for $f_k(\alpha)$, and introduce $$F_c(\alpha) = f_2(\alpha)^2f_3(\alpha)^2f_{5,c}(\alpha)f_6(\alpha)f_c(\alpha).$$

By orthogonality, the integral
\begin{align}\label{ort}
    \nu_c(n) = \int_{0}^{1} F_c(\alpha)e(-\alpha n)d\alpha
\end{align}
counts the number of solutions of \eqref{equa2} with prime variables constrained to certain intervals, and  weighted by $\log x_1 \cdots \log x_7$. Let 
$$\Theta_c = \frac{2}{3} + \frac{\theta_c}{5}+\frac{1}{6}+\frac{1}{c}.$$
Our goal is to prove that the lower bound $$\nu_c(n) \gg n^{\Theta_c}$$ holds for all sufficiently large odd integers $n$.

    Fix a real number $B \geq 1$. Let $\mathfrak{M}$ be the union of all intervals 
    \begin{align}\label{mqamajor}
        \mathfrak{M}(q,a) = \{\alpha \in[0,1]: |\alpha - a/q| \leq L^B/n \},
    \end{align} 
    with $0\leq a \leq q$, $(a,q)=1$ and $1\leq q \leq L^B$. Let $\mathfrak{m} = [0,1]\setminus \mathfrak{M}$. Theorem \ref{teo1} will be proved if we show that for all sufficiently large odd $n$, 
    \begin{align}\label{task}
        \int \limits_{\mathfrak{M}} F_c(\alpha)e(-\alpha n)d\alpha \gg n^{\Theta_c}, \qquad \int \limits_{\mathfrak{m}} F_c(\alpha)e(-\alpha n)d\alpha \ll n^{\Theta_c}L^{-1}.
    \end{align}
    We begin by analyzing the  contribution from $\mathfrak{M}$.

\section{Major arcs}

For $k=2,3,5,6,c$, let 
\begin{align}\label{vkbeta}
    S_k(q,a) = \sum \limits_{\substack{x=1 \\ (x,q)=1}}^{q} e\left(\frac{ax^k}{q} \right), \qquad v_k(\beta) = \frac{1}{k}\sum \limits_{2^{-k}n < m \leq n} m^{1/k - 1} e(\beta m).
\end{align}

Building on Hua’s proof of Lemma 6 in \cite{Hua38}, we establish the following result. 

\begin{lemma}\label{hua6}
    Let $a,q$ be integers satisfying $0\leq a \leq q$, $(a,q)=1$ and $1\leq q \leq L^B$. Let $k \ne 5$. Then, for $\alpha \in \mathfrak{M}(q,a)$, $$f_k(\alpha) = \frac{S_k(q,a)}{\phi(q)}v_k(\alpha -a/q) + O(P_ke^{-c_1\sqrt{L}}),$$
    where $c_1>0$ is an absolute constant. Moreover,
    $$f_{5,c}\left(\frac{a}{q}\right) = \frac{S_5(q,a)}{\phi(q)}P_5^{\theta_c} + O(P_5^{\theta_c}e^{-c_2\sqrt{L}}),$$ 
    where $c_2>0$ is an absolute constant.
\end{lemma}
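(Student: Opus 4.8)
The plan is to follow the standard circle-method treatment of prime exponential sums on the major arcs, with the prime number theorem in arithmetic progressions (in the Siegel--Walfisz form) as the main input. Fix $\alpha \in \mathfrak{M}(q,a)$ and write $\beta = \alpha - a/q$, so that $|\beta| \le L^B/n$. Since $p^k \equiv x^k \pmod q$ whenever $p \equiv x \pmod q$, and since any prime dividing $q$ contributes negligibly, we may sort the sum defining $f_k$ according to the reduced residue classes modulo $q$:
\[
 f_k(\alpha) = \sum_{\substack{x=1 \\ (x,q)=1}}^{q} e\!\left(\frac{ax^k}{q}\right) \sum_{\substack{\frac12 P_k < p \le P_k \\ p \equiv x \,(q)}} e(\beta p^k)\log p + O(\log q).
\]

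For the inner sum I would introduce $\vartheta(t;q,x) = \sum_{p \le t,\, p \equiv x\,(q)} \log p$ and apply partial summation against the smooth weight $e(\beta t^k)$. The Siegel--Walfisz theorem gives, uniformly for $q \le L^B$, the estimate $\vartheta(t;q,x) = t/\phi(q) + O(t e^{-c\sqrt{L}})$ for an absolute (possibly ineffective) constant $c>0$. Substituting the main term $t/\phi(q)$ produces $\tfrac{1}{\phi(q)} \int_{P_k/2}^{P_k} e(\beta t^k)\,dt$, which after the change of variables $m = t^k$ becomes exactly $\tfrac{1}{\phi(q)}$ times the integral analogue of $v_k(\beta)$. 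Summing over $x$ collapses the arithmetic factor into $S_k(q,a)/\phi(q)$, yielding the claimed main term.

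The remaining work is to verify that every error is of the stated size. The contribution of the error term $O(t e^{-c\sqrt L})$ in $\vartheta$ splits, after integration by parts, into boundary terms of size $O(P_k e^{-c\sqrt L})$ and an integral against the differential $d\,e(\beta t^k)$, which has size $\ll |\beta| t^{k-1}\,dt$; using $|\beta| \le L^B/n$ and $P_k^k = n$ the latter is $\ll L^B P_k e^{-c\sqrt L}$, and the polynomial factor $L^B$ is absorbed into the exponential by slightly shrinking the constant. Summing these over the at most $\phi(q) \le L^B$ residue classes costs another factor $L^B$, again absorbed. Finally the difference between the integral $\tfrac1k\int_{2^{-k}n}^n m^{1/k-1}e(\beta m)\,dm$ and the sum $v_k(\beta)$ is controlled by the total variation of $m^{1/k-1}e(\beta m)$, which is $\ll n^{1/k-1} + |\beta| n^{1/k} \ll L^B P_k/n$; since $n = e^{L}$ this is far smaller than $P_k e^{-c\sqrt L}$, so it too is harmless. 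The formula for $f_{5,c}(a/q)$ is the special case $\beta = 0$: here no partial summation against an oscillating weight is needed, and a single application of Siegel--Walfisz at $y = P_5^{\theta_c}$ (whose logarithm is comparable to $L$) gives $f_{5,c}(a/q) = S_5(q,a)P_5^{\theta_c}/\phi(q) + O(\phi(q)P_5^{\theta_c}e^{-c\sqrt L})$, with $\phi(q) \le L^B$ absorbed as before.

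The main obstacle I anticipate is bookkeeping rather than conceptual: one must ensure that the oscillating factor $e(\beta p^k)$ does not enlarge the Siegel--Walfisz error, which is precisely where the tight constraint $|\beta| \le L^B/n$ defining the major arcs is used, and one must track that every stray power of $L$ (from the number of residue classes, from the modulus $q$, and from the sum-versus-integral comparison) is dominated by the saving $e^{-c\sqrt L}$. The uniformity in $q$ up to $q \le L^B$ is exactly the range in which Siegel--Walfisz applies, so no deeper equidistribution input is required, and the absolute constants $c_1,c_2$ arise by choosing $c$ slightly smaller than the Siegel--Walfisz constant to swallow the finitely many polynomial factors of $L$.
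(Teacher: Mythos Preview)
Your proposal is correct and is precisely the argument the paper invokes: the paper does not give an explicit proof but simply states that the lemma follows by ``building on Hua's proof of Lemma~6 in \cite{Hua38}'', and the residue-class decomposition plus Siegel--Walfisz plus partial summation that you outline is exactly Hua's method. Your bookkeeping of the error terms (absorbing the $L^B$ factors from $|\beta|$, from $\phi(q)$, and from the sum--integral comparison into the exponential saving) is the standard verification and matches what the reference supplies.
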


\begin{lemma}\label{major}
    For large odd $n$ one has $$ \int \limits_{\mathfrak{M}} F_c(\alpha)e(-\alpha n)d\alpha \gg n^{\Theta_c}.$$
\end{lemma}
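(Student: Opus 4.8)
The plan is to substitute the approximations of Lemma \ref{hua6} into $F_c$ on each arc $\mathfrak{M}(q,a)$, isolate the resulting main term as a product of a singular series and a singular integral, and bound each factor from below. Write $\alpha = a/q + \beta$ with $|\beta| \le L^B/n$, and set $V_k(\alpha) = \phi(q)^{-1}S_k(q,a)\,v_k(\beta)$ for $k = 2,3,6,c$. I would first observe that on so short an arc $f_{5,c}(\alpha)$ may be replaced by its value at the rational point: since $|\beta|\,p^5 \le L^B P_5^{5(\theta_c-1)} = L^B n^{\theta_c-1} = o(1)$ for $p \le P_5^{\theta_c}$, partial summation gives $f_{5,c}(\alpha) = f_{5,c}(a/q)(1+o(1)) = \phi(q)^{-1}S_5(q,a)P_5^{\theta_c} + O(P_5^{\theta_c}e^{-c_2\sqrt{L}})$ uniformly on $\mathfrak{M}(q,a)$. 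Expanding $F_c = (V_2+E_2)^2(V_3+E_3)^2(\dots)$ with the error terms $E_k = O(P_k e^{-c_1\sqrt L})$ supplied by Lemma \ref{hua6}, every summand containing at least one factor $E_k$ can be controlled by pairing it with the trivial bounds $|V_j|\ll P_j$ and $|S_k(q,a)/\phi(q)|\le 1$; since $\mathfrak{M}$ has measure $\ll L^{3B}/n$ and $P_2^2P_3^2P_5^{\theta_c}P_6P_c = n^{1+\Theta_c}$, these terms together contribute $\ll n^{\Theta_c}L^{3B}e^{-c\sqrt L}$, which is negligible.

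The surviving main term factors as a product of arithmetic and analytic pieces. Putting
$$A(q) = \sum_{\substack{a=1\\(a,q)=1}}^{q} S_2(q,a)^2 S_3(q,a)^2 S_5(q,a)S_6(q,a)S_c(q,a)\, e\!\left(-\frac{an}{q}\right),$$
$$\mathfrak{S}(Q) = \sum_{q\le Q}\frac{A(q)}{\phi(q)^7}, \qquad J(Q) = \int_{|\beta|\le L^B/n} v_2(\beta)^2 v_3(\beta)^2 v_6(\beta) v_c(\beta)\, e(-\beta n)\, d\beta,$$
the foregoing shows that $\int_{\mathfrak{M}} F_c(\alpha)e(-\alpha n)\,d\alpha = \mathfrak{S}(L^B)\,P_5^{\theta_c}\,J(L^B) + O\!\left(n^{\Theta_c}e^{-c\sqrt L}\right)$. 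It then remains to complete each truncated factor and estimate it from below.

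For the analytic factor I would use the standard bound $v_k(\beta) \ll n^{1/k}(1+n|\beta|)^{-1}$ to show that the tail $\int_{|\beta|>L^B/n}$ is negligible, so that $J(L^B) = J + O(\cdot)$, where $J = \int_{-\infty}^{\infty}v_2^2v_3^2v_6v_c\,e(-\beta n)\,d\beta$ is the singular integral. By the usual change of variables $J = \mathfrak{J}\,n^{2/3+1/6+1/c}$ with $\mathfrak{J}>0$ the volume of the associated real-solution set, which is positive because the relevant equation is solvable over the reals; hence $P_5^{\theta_c}J \gg n^{\theta_c/5+2/3+1/6+1/c} = n^{\Theta_c}$. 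For the arithmetic factor, $A(q)$ is multiplicative in $q$, and Weil's bound $S_k(p,a)\ll p^{1/2}$ yields $A(p)/\phi(p)^7 \ll p^{-5/2}$, so the series converges absolutely, the truncation error $\mathfrak{S}(L^B) = \mathfrak{S} + O(L^{-A})$ is admissible, and $\mathfrak{S} = \prod_p \sigma_p$ factors as an Euler product of local densities with $\sigma_p = 1 + O(p^{-5/2})$.

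The main obstacle, and the only genuinely delicate point, is establishing the lower bound $\mathfrak{S}\gg 1$, i.e. the positivity of every local factor $\sigma_p$. For all large $p$ this is immediate from $\sigma_p = 1+O(p^{-5/2})$, so only finitely many primes require local solubility to be checked by hand, and the truly critical one is $p=2$. Here the hypothesis that $n$ is \emph{odd} is essential: each prime variable $x_j>2$ is a unit modulo $2^j$, so $x_j^{k}\equiv 1 \pmod 2$ and the seven terms on the right of \eqref{equa2} sum to $7\equiv 1 \pmod 2$; thus the congruence is solvable in units modulo every power of $2$ precisely when $n$ is odd, which forces $\sigma_2>0$. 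Verifying positivity of the remaining small-prime densities is routine counting of solutions in reduced residues. Combining $\mathfrak{S}\gg 1$ with $P_5^{\theta_c}J \gg n^{\Theta_c}$ and absorbing the error terms completes the proof.
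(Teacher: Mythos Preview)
Your outline is correct and follows the same route as the paper: approximate each $f_k$ on $\mathfrak{M}(q,a)$ via Lemma \ref{hua6} (freezing $f_{5,c}$ at the rational point), separate the main term into a truncated singular series times a truncated singular integral, complete both, and verify positivity of the local factors, the case $p=2$ being exactly where the parity hypothesis on $n$ enters.

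Two small technical points need tidying. First, with the paper's $v_k(\beta)$ defined as a finite exponential sum over integers $m$, it is $1$-periodic in $\beta$, so your completion of $J(L^B)$ to $\int_{-\infty}^{\infty}$ is divergent; the paper instead extends only to $[-1/2,1/2]$ and then reads the integral off by orthogonality as a weighted count of solutions to $m_1+\dots+m_7=n$ in the relevant boxes, from which the lower bound is immediate. Second, multiplicativity of $A(q)$ together with a Weil bound on $A(p)$ alone does not yield absolute convergence of $\sum_q A(q)/\phi(q)^7$ without also controlling higher prime powers; the paper deals with this by quoting Hua's result that $A_{n,c}(q)=0$ unless $q$ is squarefree (equivalently, it bounds $U_c(q,a)\ll q^{7/2+\epsilon}$ for all $q$ directly). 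For local solvability at the remaining small primes the paper invokes the Cauchy--Davenport theorem rather than ad hoc checking, which is cleaner but equivalent to what you propose.
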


\begin{proof}
   We begin by providing a suitable approximation for $f_{5,c}(\alpha)$. Suppose $\alpha \in \mathfrak{M}(q,a)$ with $q \leq L^B$ and $(a,q)=1$. For $p\leq P_5^{\theta_c}$ and $|\beta| \leq L^B/n$, Taylor expansion yields $$e(\beta p^5) = 1 + O(L^Bn^{-1}P_5^{5\theta_c}).$$ Taking $\beta = \alpha - a/q$ and noticing that $n^{-1}P_5^{5\theta_c} = n^{\theta_c -1} $, we conclude from the definition of $f_{5,c}$ that $$ f_{5,c}(\alpha) = f_{5,c}(a/q) + O(P_5^{\theta_c}L^Bn^{\theta_c -1}).$$
   Using Lemma \ref{hua6}, we infer 
   \begin{align}\label{f5alpha}
       f_{5,c}(\alpha) = \phi(q)^{-1}S_5(q,a)P_5^{\theta_c} + O(P_5^{\theta_c}L^{-4B}).
   \end{align}
   Multiplying the approximations for $f_k(\alpha)$ yields an estimate of $F_c(\alpha)$. Let us define
   \begin{align}
       U_c(q,a) &= S_2(q,a)^2S_3(q,a)^2S_5(q,a)S_6(q,a)S_c(q,a), \nonumber \\
       w_c(\beta) & = v_2(\beta)^2v_3(\beta)^2v_6(\beta)v_c(\beta). \label{wcdef}
   \end{align}
     Using \eqref{f5alpha} and the approximations from Lemma \ref{hua6}, we obtain $$F_c(\alpha) = P_5^{\theta_c} \phi(q)^{-1}U_c(q,a)w_c(\alpha - a/q) + O(n^{1+\Theta_c}L^{-4B}).$$
     We integrate this over $\mathfrak{M}$, a set of measure $O(L^{3B}/n)$, yielding
     \begin{align}
         \int \limits_{\mathfrak{M}}F_c&(\alpha)e(-\alpha n) d\alpha \nonumber \\&= P_5^{\theta_c} \sum \limits_{q \leq L^B}\frac{A_{n,c}(q)}{\phi(q)^7} \int_{-L^B/n}^{L^B/n} w_c(\beta)e(-\beta n)d\beta + O(n^{\Theta_c} L^{-B}), \label{over}
     \end{align} 
     where 
     $$A_{n,c}(q) = \sum \limits_{\substack{a=1 \\ (a,q)=1}}^{q} U_c(q,a)e(-an/q).$$
    By Lemma 5 of \cite{Hua38} we have $U_c(q,a) \ll q^{7/2 + \epsilon}$ for coprime $a,q$, and hence $A_{n,c}(q) \ll q^{9/2+\epsilon}$ uniformly in $n$. It follows that the series $$ \mathfrak{S}_c(n) = \sum \limits_{q =1}^{\infty}\frac{A_{n,c}(q)}{\phi(q)^7}$$ converges absolutely. Furthermore, routine calculations show that the estimate 
    \begin{align*}
        \sum \limits_{q \leq L^B}\frac{A_{n,c}(q)}{\phi(q)^7} = \mathfrak{S}_c(n) + O(L^{-B})
    \end{align*}
    holds uniformly in $n$. Also, by Lemma 6.2 of \cite{bookVau}, one has
    \begin{align}\label{wcestimate}
        w_c(\beta) \ll n^{\Theta_c +1-\frac{\theta_c}{5}}(1+n|\beta|)^{-2}.
    \end{align}
    Thus, we can replace the sum in \eqref{over} by $\mathfrak{S}_c(n)$ with negligible errors. 
    We aim to prove that $ \mathfrak{S}_c(n) \gg 1$ for all large odd $n$. Building on the argument used in the proof of Lemma 2.11 of \cite{bookVau}, we can likewise establish that $A_{n,c}(q)$ is multiplicative. Moreover, Lemma 4 of \cite{Hua38} shows that $A_{n,c}(q) = 0$ unless $q$ is square-free. This yields
    \begin{align}\label{prod}
        \mathfrak{S}_c(n) = \prod \limits_{p}(1+(p-1)^{-7}A_{n,c}(p)).
    \end{align}

    Let $M_{n,c}(p)$ denote the number of solutions of \eqref{equa2} in the finite field $\mathbb{F}_p$, with all variables non-zero. By orthogonality $\mod p$, $$1 + (p-1)^{-7} A_{n,c}(p)= p(p-1)^{-7}M_{n,c}(p).$$
    Applying the Cauchy–Davenport theorem (Lemma 2.14 of \cite{bookVau}) gives  $M_{n,c}(p) \geq 1$, except when $p=2$ and $2 \mid n$. Hence the factors in \eqref{prod} are positive for all odd $n$, and the uniform convergence with respect to $n$ gives us a number $\ell$ such that, for odd $n$,
    $$\mathfrak{S}_c(n) \geq \frac{1}{2} \prod \limits_{p\leq \ell}(1+(p-1)^{-7}A_{n,c}(p)) \geq \frac{1}{2} \prod \limits_{p\leq \ell}p^{-6}.$$
    Thus, the lower bound $ \mathfrak{S}_c(n) \gg 1$ holds for all large odd $n$.

    To conclude the proof, it suffices to show that the integral on the right hand side of \eqref{over} is $\gg n^{\Theta_c-\frac{\theta_c}{5}}$. Using the estimate \eqref{wcestimate}, we infer 
    $$ \int_{-L^B/n}^{L^B/n} w_c(\beta)e(-\beta n)d\beta = \int_{-1/2}^{1/2} w_c(\beta)e(-\beta n)d\beta + O(n^{\Theta_c-\frac{\theta_c}{5}}L^{-B}).$$
    By orthogonality (see \eqref{wcdef} and \eqref{vkbeta}), the integral on the right hand side is
    \begin{align}\label{sum}
        \frac{1}{2^2 \cdot 3^2 \cdot 6\cdot c} \sum (m_1m_2)^{-\frac{1}{2}}(m_3m_4)^{-\frac{2}{3}}(m_6)^{-\frac{5}{6}}(m_7)^{\frac{1-c}{c}},
    \end{align}
    where the sum extends over $m_1,m_2,m_3,m_4,m_6,m_7$ subject to 
    \begin{align}\label{sixsum}
        m_1+m_2+m_3+m_4+m_6+m_7=n,
    \end{align}
    with
    \begin{align}
        & 2^{-2}n<m_1,m_2 \leq n, \qquad 2^{-3}n<m_3,m_4 \leq n, \qquad \label{restr1}\\
        &2^{-6}n<m_6\leq n, \hspace{0.5cm} 2^{-c}n<m_7\leq n. \label{restr2}
    \end{align}
    Observe that whatever $m_1,m_2 \leq \frac{9}{32}n$,  $m_3,m_4 \leq \frac{9}{64}n$ and $m_6\leq \frac{3}{128}n$ in accordance with \eqref{restr1} and \eqref{restr2}, we can solve \eqref{sixsum} with $m_7$ satisfying \eqref{restr2}. Consequently, the sum in \eqref{sum} is bounded below by $n^{\Theta_c-\frac{\theta_c}{5}}$. This completes the proof of our lemma.
    
\end{proof}

\section{Minor arcs}

In order to bound the minor arcs contribution to \eqref{ort}, we consider the following mean values:
\begin{align*}
    J_{1,c} = \int_{0}^{1} |f_2f_c|^2|f_{5,c}|^4d\alpha, \qquad
    J_2 = \int_{0}^{1}|f_2f_3f_6|^2d\alpha,\qquad
    J_3= \int_{0}^{1}|f_2|^2|f_3|^4d\alpha.
\end{align*}
We aim to show that
\begin{align}
    J_{1,c} &\ll P_2^{1+\epsilon}P_cP_5^{2\theta_c},\label{j1}\\
    J_2 &\ll n^{1+\epsilon},\label{j2}\\
    J_3 & \ll n^{\frac{4}{3}+\epsilon}.\label{j3}
\end{align}

To estimate $J_1$, we consider the number $T_c$ of solutions of the equation $$y_1^c-y_2^c = z_1^5+z_2^5-z_3^5-z_4^5,$$ with the variables constrained by
\begin{align}\label{yz}
    \frac{1}{2}P_c<y_i\le P_c, \qquad 1\leq z_j \leq P_5^{\theta_c} \qquad (1\leq i \leq 2, 1 \leq j \leq 4).
\end{align}
We apply Lemma 4 from \cite{Vau86}, taking $k=c, \lambda = \theta_c, P=\frac{1}{2}P_c, j=3$, with $R(m)$ denoting the number of pairs $(z_1,z_2) \in [1,P_5^{\theta_c}]^2$ such that $z_1^5 + z_2^5 = m$. In this setting, the quantity $T$ in Vaughan's lemma coincides with our $T_c$. Note that the equation $z_1^5 + z_2^5 = m$ implies $z_1 + z_5 \mid m$. Thus, by a divisor counting argument, we obtain $R(m) \ll m^{\epsilon}$. Consequently, 
$$ \sum \limits_{m}R(m) \ll P_5^{2\theta_c}, \qquad \sum \limits_{m}R(m)^2 \ll P_5^{2\theta_c + \epsilon}.$$
Given this setup, Lemma 4 of \cite{Vau86} yields the estimate
$$T_c \ll P_cP_5^{2\theta_c+\epsilon} + P_c^{\frac{7c}{8}(\theta_c-1)+\frac{11}{8}+\epsilon}(P_5^{2\theta_c})^{\frac{9}{8}} \ll P_c^{1+\epsilon}P_5^{2\theta_c}.$$
Here, the parameter $\theta_c$ is chosen to balance the contributions of the two terms in the bound for $T_c$. 

 By orthogonality, we have $J_{1,c} \ll N_cL^8$, where $N_c$ denotes the number of solutions of the equation
\begin{align}\label{bigequ}
    x_1^2-x_2^2=y_1^c-y_2^c+z_1^5+z_2^5-z_3^5-z_4^5
\end{align}
in integers satisfying \eqref{yz} and $\frac{1}{2}P_2 < x_1,x_2\leq P_2$. 
The number of solutions for which the right hand side of \eqref{bigequ} equals zero is $O(P_2T_c)$. To count the remaining solutions, we observe that there are $O(P_c^2P_5^{4\theta_c})$ choices for $y_j,z_i$ such that the right hand side of \eqref{bigequ} is nonzero. For each such choice, both $x_1-x_2$ and $x_1+x_2$ divide the right hand side of \eqref{bigequ}, yielding $O(P_2^\epsilon)$ possibilities for $x_1,x_2$. This gives 
$$N_c \ll P_2P_c^{1+\epsilon}P_5^{2\theta_c} + P_2^{\epsilon}P_c^2P_5^{4\theta_c} \ll P_2^{1+\epsilon}P_cP_5^{2\theta_c},$$
which completes the proof of \eqref{j1}.

As for the means $J_2$ and $J_3$, we observe that both integrals can be estimated by counting the number of solutions of an underlying Diophantine equation. As in the discussion of $J_1$, this approach removes the logarithmic weights and the restriction to prime variables. Therefore, to estimate $J_2$ and $J_3$, we may replace the $f_j$ with ordinary Weyl sums. By applying Lemma 1 from \cite{Bru87}, we obtain the bound stated in \eqref{j2}.

The quantity $J_3$ is bounded above by the number of solutions of the equation 
$$x_1^2 -x_2^2 = h_1^3+h_2^3-h_3^3-h_4^3,$$
with $1\leq x_1,x_2\leq P_2$ and $1\leq h_i \leq P_3$. By an application of Lemma 2.5 of \cite{bookVau} (Hua's lemma),
the number of choices for $h_i$ for the right hand side to be zero is $O(P_3^{2+\epsilon})$. For each such choice, there are $O(P_2)$ possibilities for $x_1,x_2$, yielding a total of $O(P_2P_3^{2+\epsilon})$ solutions. For the remaining solutions, we consider choices of $h_1,\dots,h_4$ such that the right hand side is non-zero. There are $O(P_3^4)$ such choices, and for each one, there are $O(P_2^\epsilon)$ possibilities for $x_1,x_2$. Thus, 
\begin{align*}
 J_3 \ll P_2P_3^{2+\epsilon} + P_2^\epsilon P_3^4 \ll n^{\frac{4}{3}+\epsilon}, 
\end{align*}
establishing \eqref{j3}.

We are now well equipped to handle the contribution to \eqref{ort} arising from the sets 
\begin{align}
    \mathcal{E}_6 &= \{ \alpha \in [0,1]: |f_6(\alpha)| \leq P_6^{1-24\tau}\}, \label{set1}\\
    \mathcal{E}_7 &= \{ \alpha \in [0,1]: |f_7(\alpha)| \leq P_7^{\frac{139}{140}-24\tau}\}, \label{set2}
\end{align}
where $\tau=2^{-20}$.

\begin{lemma}\label{epsilon}
    One has $$ \int \limits_{\mathcal{E}_c} F_c(\alpha)e(-\alpha n)d\alpha \ll n^{\Theta_c - \tau}.$$
\end{lemma}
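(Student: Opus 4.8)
The plan is to exploit the defining property of $\mathcal{E}_c$, namely that $|f_c|$ is pointwise small there, by peeling off a fractional power of $f_c$ through its supremum and estimating the surviving integrand by Hölder's inequality against the three mean values $J_{1,c}$, $J_2$, $J_3$ already established. Starting from $|F_c| = |f_2|^2|f_3|^2|f_{5,c}||f_6||f_c|$ and writing $|f_c| = |f_c|^{1/2}\cdot|f_c|^{1/2}$, I would bound one factor $|f_c|^{1/2}$ by $(\sup_{\mathcal{E}_c}|f_c|)^{1/2}$ and enlarge the remaining range of integration from $\mathcal{E}_c$ to $[0,1]$, since the integrand is nonnegative. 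Because $\bigl|\int_{\mathcal{E}_c}F_c(\alpha)e(-\alpha n)\,d\alpha\bigr|\le\int_{\mathcal{E}_c}|F_c|\,d\alpha$, this reduces the lemma to bounding
\[
\Bigl(\sup_{\mathcal{E}_c}|f_c|\Bigr)^{1/2}\int_0^1 |f_2|^2|f_3|^2|f_{5,c}||f_6||f_c|^{1/2}\,d\alpha.
\]

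The crucial step is the choice of Hölder exponents, which is forced by the shape of the mean values. The factor $|f_{5,c}|^4$ in $J_{1,c}$ pins the weight on that integral to $1/4$, the factor $|f_6|^2$ in $J_2$ pins its weight to $1/2$, and the Hölder condition then leaves weight $1/4$ for $J_3$; one verifies that these weights reproduce exactly the powers $|f_2|^2$ and $|f_3|^2$, and that the power of $f_c$ they generate is precisely $1/2$, matching the factor retained above. Concretely, the factorisation
\[
|f_2|^2|f_3|^2|f_{5,c}||f_6||f_c|^{1/2}=\bigl(|f_2|^2|f_c|^2|f_{5,c}|^4\bigr)^{1/4}\bigl(|f_2|^2|f_3|^2|f_6|^2\bigr)^{1/2}\bigl(|f_2|^2|f_3|^4\bigr)^{1/4}
\]
together with Hölder's inequality (exponents $4,2,4$) gives the surviving integral $\ll J_{1,c}^{1/4}J_2^{1/2}J_3^{1/4}$.

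I would then substitute \eqref{j1}, \eqref{j2}, \eqref{j3} and the pointwise bounds \eqref{set1}, \eqref{set2}, converting everything to powers of $n$ via $P_k=n^{1/k}$. The product $J_{1,c}^{1/4}J_2^{1/2}J_3^{1/4}$ contributes $n^{23/24+1/(4c)+\theta_c/10}$ up to $n^\epsilon$ and logarithmic factors, while $(\sup_{\mathcal{E}_c}|f_c|)^{1/2}$ contributes $n^{1/12-2\tau}$ for $c=6$ and $n^{139/1960-12\tau/7}$ for $c=7$. It then remains to check that the $\tau$-free parts sum to $\Theta_c$ exactly: for $c=6$ one gets $13/12+1/12=7/6=\Theta_6$, and for $c=7$ a common-denominator computation (denominator $5880$) gives $6784/5880=\Theta_7$. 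Hence the total exponents are $\Theta_6-2\tau$ and $\Theta_7-12\tau/7$ respectively. In both cases the saving strictly exceeds $\tau$, so for $\epsilon$ sufficiently small the $n^\epsilon$ and logarithmic losses are absorbed, yielding the claimed bound $\ll n^{\Theta_c-\tau}$.

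The only real difficulty here is the bookkeeping rather than any new idea: one must confirm that the specific values $\theta_6=5/6$, $\theta_7=87/98$ and the thresholds $1-24\tau$ and $\tfrac{139}{140}-24\tau$ defining $\mathcal{E}_6$ and $\mathcal{E}_7$ are precisely those making the $\tau$-free exponents collapse onto $\Theta_c$ with strictly positive room. Once the mean values and the smallness sets are in place, the lemma is essentially a single, carefully balanced application of Hölder's inequality combined with the trivial supremum bound on $\mathcal{E}_c$.
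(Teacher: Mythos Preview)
Your proposal is correct and follows exactly the paper's own argument: peel off $|f_c|^{1/2}$ via the supremum on $\mathcal{E}_c$, apply H\"older with exponents $(4,2,4)$ to obtain $J_{1,c}^{1/4}J_2^{1/2}J_3^{1/4}$, and then substitute the mean-value bounds and the defining thresholds of $\mathcal{E}_c$. The paper states this in two lines and leaves the numerical verification to the reader, whereas you have spelled out the factorisation and the arithmetic explicitly.
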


\begin{proof}
    We apply H\"older's inequality to deduce that 
    $$ \int \limits_{\mathcal{E}_c} F_c(\alpha)e(-\alpha n)d\alpha \leq J_{1,c}^{1/4}J_2^{1/2}J_3^{1/4} \sup \limits_{\alpha \in \mathcal{E}_c}|f_c(\alpha)|^{1/2}.$$
    Using the bounds from \eqref{j1}, \eqref{j2}, \eqref{j3}, and the definitions in \eqref{set1} and \eqref{set2}, along with a straightforward calculation, the result follows.
\end{proof}

To treat the complement of $\mathcal{E}_c$, we apply Lemma 2.2 from \cite{KumWoo16} in conjunction with partial summation, taking $k=c$ and $X = \frac{1}{2}P_c$. It follows that $$[0,1]\setminus \mathcal{E}_c \subset \mathfrak{R}_c,$$
where $\mathfrak{R}_c$ is the union of the disjoint intervals
$$ \mathfrak{R}_c(q,a) = \{\alpha \in [0,1]: |q\alpha -a| \leq Q_cn^{-1}\}$$
with $0\leq a \leq q$, $(a,q)=1$ and $1 \leq q \leq Q_c$, where $Q_c = n^{\frac{2}{3c^2(c-1)}}$. 

Define a function $\Upsilon_c:\mathfrak{R}_c \to [0,1]$ by $$\Upsilon_c(\alpha) = (q+n|q\alpha - a|)^{-1} \qquad (\alpha \in \mathfrak{R}_c(q,a)).$$
Then, by Theorem 2 of \cite{Kum06} and partial summation, there exists a constant $C>0$ such that for $\alpha \in \mathfrak{R}_c$ and $k=2,3,6,c$, one has 
$$f_k(\alpha) \ll P_k L^C\Upsilon_c(\alpha)^{1/2 - \epsilon}.$$
Using the trivial bound for $f_{5,c}$, we deduce 
$$F_c(\alpha) \ll n^{1+\Theta_c}L^{6C}\Upsilon_c(\alpha)^{5/2}.$$
We now integrate over $\mathfrak{R}_c\setminus \mathfrak{M}$. Routine calculations yield (see \eqref{mqamajor})
$$ \int \limits_{\mathfrak{R}_c\setminus \mathfrak{M}} |F_c(\alpha)|d\alpha \ll n^{1+\Theta_c}L^{6C}\int \limits_{\mathfrak{R}_c\setminus \mathfrak{M}} |\Upsilon_c(\alpha)|^{5/2}d\alpha \ll  n^{\Theta_c}L^{6C-B/2}.$$
This establishes the following result.

\begin{lemma}\label{ult}
    Let $B\geq 12C + 2$. Then $$\int \limits_{\mathfrak{R}_c\setminus \mathfrak{M}} |F_c(\alpha)|d\alpha \ll  n^{\Theta_c}L^{-1}.$$
\end{lemma}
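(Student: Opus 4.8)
The plan is to read the conclusion off the pointwise bound on $\mathfrak{R}_c$ that is already in hand and then to extract a power of $L$ from the excision of the principal part $\mathfrak{M}$. Concretely, combining Kumchev's estimate $f_k(\alpha)\ll P_kL^C\Upsilon_c(\alpha)^{1/2-\epsilon}$ for $k=2,3,6,c$ with the trivial bound $f_{5,c}\ll P_5^{\theta_c}$ gives, for $\alpha\in\mathfrak{R}_c$, the six-fold product bound $F_c(\alpha)\ll n^{1+\Theta_c}L^{6C}\Upsilon_c(\alpha)^{3-6\epsilon}\ll n^{1+\Theta_c}L^{6C}\Upsilon_c(\alpha)^{5/2}$, the last step using $\Upsilon_c\le 1$ and $\epsilon\le 1/12$, exactly as recorded above. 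The lemma therefore reduces to showing $\int_{\mathfrak{R}_c\setminus\mathfrak{M}}\Upsilon_c(\alpha)^{5/2}\,d\alpha\ll n^{-1}L^{-B/2}$, since multiplying this by $n^{1+\Theta_c}L^{6C}$ returns $n^{\Theta_c}L^{6C-B/2}$.

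The substance lies in this $\Upsilon_c^{5/2}$-integral, which I would evaluate arc by arc. On a single arc $\mathfrak{R}_c(q,a)$ the substitution $\beta=q\alpha-a$ turns $\int(q+n|\beta|)^{-5/2}\,d\alpha$ into an elementary integral of size $O(n^{-1}q^{-5/2})$, and summing over the $\phi(q)\le q$ admissible values of $a$ contributes $O(n^{-1}q^{-3/2})$ per modulus. I would then split $\mathfrak{R}_c\setminus\mathfrak{M}$ into two ranges. For $q>L^B$ the whole arc survives, and $\sum_{q>L^B}n^{-1}q^{-3/2}\ll n^{-1}L^{-B/2}$. For $q\le L^B$ the excision of $\mathfrak{M}$ removes the central sub-arc $|\alpha-a/q|\le L^B/n$, so on the surviving tail one has $n|\beta|>qL^B$; replacing $(q+n|\beta|)^{-5/2}$ by $(n|\beta|)^{-5/2}$ and integrating the tail gives $O(n^{-1}q^{-3/2}L^{-3B/2})$ per modulus, whose sum over $q\le L^B$ stays $\ll n^{-1}L^{-3B/2}$. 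Both ranges are thus $\ll n^{-1}L^{-B/2}$, as needed.

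Having established $\int_{\mathfrak{R}_c\setminus\mathfrak{M}}|F_c(\alpha)|\,d\alpha\ll n^{\Theta_c}L^{6C-B/2}$, the final step is the one-line substitution of the hypothesis: $B\ge 12C+2$ forces $6C-B/2\le 6C-(6C+1)=-1$, whence $L^{6C-B/2}\le L^{-1}$ and the claimed bound $n^{\Theta_c}L^{-1}$ follows.

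I expect the main obstacle to be the bookkeeping that shows the excision genuinely supplies the factor $L^{-B/2}$: without removing $\mathfrak{M}$ the integral $\int_{\mathfrak{R}_c}\Upsilon_c^{5/2}$ is merely $\ll n^{-1}$, which would leave $n^{\Theta_c}L^{6C}$ and fail. One must therefore verify that both the tail $q>L^B$ and the truncated central ranges each carry a saving of $L^{-B/2}$ or better, and in particular that the truncation scale $L^B/n$ sits well inside the outer radius $Q_cn^{-1}$ of $\mathfrak{R}_c(q,a)$, so that on the surviving part one may legitimately replace $q+n|\beta|$ by $n|\beta|$. Everything else is a routine assembly of the cited pointwise bounds together with the final inequality in $B$.
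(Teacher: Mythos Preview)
Your argument is correct and follows exactly the route the paper takes: bound $F_c$ pointwise on $\mathfrak{R}_c$ by $n^{1+\Theta_c}L^{6C}\Upsilon_c(\alpha)^{5/2}$, show $\int_{\mathfrak{R}_c\setminus\mathfrak{M}}\Upsilon_c^{5/2}\,d\alpha\ll n^{-1}L^{-B/2}$, and then invoke $B\ge 12C+2$. The paper simply records the outcome of the $\Upsilon_c^{5/2}$-integral as a ``routine calculation'', whereas you have spelled out the arc-by-arc computation and the split at $q=L^B$ that supplies the saving of $L^{-B/2}$.
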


We are now in a position to complete the proof of Theorem \ref{teo1}. We choose $B$ so that  Lemma \ref{ult} applies. By the definitions of the sets $\mathfrak{M}, \mathfrak{R}_c $ and $\mathcal{E}_c$, we observe that $$\mathfrak{m}\subset \mathcal{E}_c \cup \mathfrak{R}_c\setminus \mathfrak{M}.$$
Therefore, combining Lemmas \ref{epsilon}, \ref{ult}, and \ref{major}, we obtain \eqref{task}, which completes the proof of Theorem \ref{teo1}.

\section*{Acknowledgements}

The authors would like to thank the Graduate Program in Mathematics at the University of Brasília for the academic support provided during the development of this research. We also acknowledge the financial support from CAPES (Coordenação de Aperfeiçoamento de Pessoal de Nível Superior, Brazil).

\section*{Declaration of Interest}
The authors declare that they have no known competing financial interests or personal relationships that could have appeared to influence the work reported in this paper.

\bibliographystyle{elsarticle-num}
\bibliography{sample}

\end{document}